\newcommand{\bydef}{:=}
\newcommand{\veps}{\varepsilon}
\newcommand{\id}{\mathrm{id}}
\newcommand{\cH}{\mathcal{H}}
\newcommand{\cL}{\mathcal{L}}
\newcommand{\frL}{{\mathfrak L}}
\newcommand{\frP}{{\mathfrak P}}
\newcommand{\ZZ}{\mathbb{Z}}
\newcommand{\PP}{\mathbb{P}}
\newcommand{\CC}{\mathbb{C}}
\newcommand{\FF}{\mathbb{F}}
\DeclareMathOperator{\Hom}{\mathrm{Hom}}
\DeclareMathOperator{\Aut}{\mathrm{Aut}}
\DeclareMathOperator{\Diag}{\mathrm{Diag}}
\DeclareMathOperator{\supp}{\mathrm{Supp}}
\newcommand{\frsl}{{\mathfrak{sl}}}
\newcommand{\frso}{{\mathfrak{so}}}
\numberwithin{equation}{section}
\newcommand{\subo}{_{\bar 0}}
\newcommand{\subuno}{_{\bar 1}}
\newcommand{\bup}{\textup{b}}
\newtheorem{theorem}{Theorem}[section]
\newtheorem{proposition}[theorem]{Proposition}
\newtheorem*{proposition*}{Proposition}
\newtheorem{lemma}[theorem]{Lemma}
\theoremstyle{definition}
\newtheorem{example}[theorem]{Example}
\theoremstyle{remark} \newtheorem{remark}[theorem]{Remark}
\begin{document}

\title{Non-group gradings on  simple Lie algebras}

\author{Alberto Elduque}
\address{Departamento de
Matem\'{a}ticas e Instituto Universitario de Matem\'aticas y
Aplicaciones, Universidad de Zaragoza, 50009 Zaragoza, Spain}
\email{elduque@unizar.es}
\thanks{${}^3$%
Supported by grants MTM2017-83506-C2-1-P (AEI/FEDER, UE) and E22\_17R
(Gobierno de Arag\'on, Grupo de referencia ``\'Algebra y
Geometr{\'\i}a'', cofunded by Feder 2014-2020 
``Construyendo Europa desde Arag\'on'')}

\subjclass[2010]{Primary 17B70; Secondary  17B40}

\keywords{Set grading; group grading; pure grading; orthogonal Lie algebra; 
Steiner system}

\date{July 21, 2021}

\begin{abstract}
A set grading on the split simple Lie algebra of type $D_{13}$, that cannot
be realized as a group-grading, is constructed by splitting the set of positive roots into a disjoint union of pairs of orthogonal roots, following a pattern provided by the lines of the projective plane over $GF(3)$. This answers in the negative \cite[Question 1.11]{EKmon}.

Similar non-group gradings are obtained for types $D_{n}$ with $n\equiv 1\pmod{12}$,
by substituting the lines in the projective plane by blocks of suitable Steiner systems.
\end{abstract}

\maketitle

\section{Introduction}\label{se:intro}

The aim of this paper is the construction of a set grading on a simple Lie algebra
that cannot be realized as a group grading, thus answering in the negative
\cite[Question 1.11]{EKmon}.

\smallskip

The systematic study of gradings on Lie algebras was initiated in 1989 by Patera and Zassenhaus \cite{PZ}, although particular gradings have been used from the beginning of Lie theory.

A \emph{set grading} on a Lie algebra $\cL$ over a field $\FF$ 
is a decomposition into a direct sum
of vector subspaces: $\Gamma:\cL=\bigoplus_{s \in S}\cL_s$, where $\cL_s\neq 0$ for any $s$ in the grading set $S$, such that for any $s_1,s_2\in S$, either 
$[\cL_{s_1},\cL_{s_2}]=0$ or there exists an element $s_3\in S$ such that
$[\cL_{s_1},\cL_{s_2}]\subseteq \cL_{s_3}$. The subspaces $\cL_s$ are called the
\emph{homogeneous components} of $\Gamma$.

Two such gradings $\Gamma:\cL=\bigoplus_{s \in S}\cL_s$ and
$\Gamma':\cL'=\bigoplus_{s' \in S'}\cL'_{s'}$ are said to be \emph{equivalent}
if there is an isomorphism of Lie algebras $\varphi:\cL\rightarrow\cL'$ such that
for any $s\in S$ there is an $s'\in S'$ such that $\varphi(\cL_s)=\cL'_{s'}$.

On the other hand, given a (semi)group $G$, a \emph{$G$-grading} on $\cL$ is
a decomposition as above $\Gamma:\cL=\bigoplus_{g\in G}\cL_g$, such that 
$[\cL_{g_1},\cL_{g_2}]\subseteq \cL_{g_1g_2}$ for all $g_1,g_2\in G$. The \emph{support} of $\Gamma$ is the subset 
$\supp(\Gamma)\bydef\{g\in G\mid \cL_g\neq 0\}$.

Given a set grading $\Gamma:\cL=\bigoplus_{s \in S}\cL_s$, it is said that
$\Gamma$ can be realized as a (semi)group grading, if there exists a (semi)group
$G$ and a one-to-one map $\iota: S\hookrightarrow G$ such that the subspaces
$\cL_{\iota(s)}\bydef\cL_s$, and $\cL_g\bydef 0$ if $g\not\in\iota(S)$, form a 
$G$-grading of $\cL$.

Given the set grading $\Gamma:\cL=\bigoplus_{s \in S}\cL_s$, its 
\emph{universal group} is the group defined by generators (the elements of $S$) and
relations as follows:
\[
U(\Gamma)\bydef\Bigl\langle S\mid s_1s_2=s_3\ \text{for all 
$s_1,s_2,s_3\in S$ with $0\neq [\cL_{s_1},\cL_{s_2}]\subseteq\cL_{s_3}$}\Bigr\rangle.
\]
There is a natural map $\iota:S\rightarrow U(\Gamma)$ taking $s$ to its coset
modulo the relations, and \emph{$\Gamma$ can be realized as a group grading if and only
if $\iota$ is one-to-one}.

The \emph{diagonal group} of $\Gamma$ is the subgroup $\Diag(\Gamma)$ 
of the group $\Aut(\cL)$
of automorphisms of $\cL$, consisting of those automorphisms that act by 
multiplication by a scalar on each homogeneous component. Any 
$\varphi\in\Diag(\Gamma)$ gives a map $\chi:S\rightarrow \FF^\times$ by the equation $\varphi\vert_{\cL_{s}}=\chi(s)\id$. This map induces a group homomorphism 
(a character with values in $\FF$), denoted by the same symbol,
$\chi:U(\Gamma)\rightarrow \FF^\times$. And conversely, any character $\chi$
determines a unique element in $\Diag(\Gamma)$. This shows that the diagonal group is isomorphic to the group of characters with values in 
$\FF$ of the universal group. If $\FF$ is algebraically closed of characteristic $0$ and $U(\Gamma)$ is abelian, characters separate points, and hence if $\Gamma$
is a group grading, the homogeneous components are the common
eigenspaces for the action of $\Diag(\Gamma)$.

For the basic results on gradings, the reader is referred to 
\cite[Chapter 1]{EKmon}.

\smallskip

In the seminal work \cite{PZ}, it was erroneously asserted that any set grading
on a finite-dimensional Lie algebra can be realized as a semigroup grading \cite[Theorem 1.(d)]{PZ}.
Counterexamples were given in \cite{Eld06} and \cite{Eld09}. In particular, in 
\cite{Eld09} there appears a non-semigroup grading on the semisimple Lie algebra 
$\frsl_2\oplus\frsl_2$. On the other hand, Cristina Draper proved  that if $G$ is
a semigroup and $\Gamma:\cL=\bigoplus_{g\in G}\cL_g$ is a $G$-grading on a 
simple Lie algebra $\cL$, then $\supp(\Gamma)$ generates an abelian subgroup 
of $G$ so, in particular, $\Gamma$ can be realized as a group grading. 
(See \cite[Proposition 1.12]{EKmon}.) As a direct consequence, \emph{the universal group
of any set grading on a finite-dimensional simple  Lie algebra is always abelian}.

\smallskip

There appeared naturally the following question \cite[Question 1.11]{EKmon}: 
\begin{center}
\emph{Can any set grading on a finite-dimensional simple Lie algebra over $\CC$\\
be realized as a group grading?}
\end{center}
The main goal of this paper is to answer this question in the negative.

\smallskip

The author is indebted to Alice Fialowski, who during the
``Workshop on Non-Associative Algebras and Applications'', held in Lancaster University in 2018,
showed him a 
$\left(\ZZ/2\right)^n$-grading on the orthogonal Lie algebra 
$\cL=\frso(2n)$ over $\CC$,
constructed by Andriy Panasyuk,
with some nice properties: one of the homogeneous component $\cL_g$, 
with $g\neq e$, is a Cartan subalgebra,
while all the other nonzero homogeneous components are toral two-dimensional
subalgebras. With hindsight, this grading
turns out to be the one in Example \ref{ex:Dn}. This is an example of
what Hesselink \cite{Hes} called \emph{pure gradings}. Some properties of these
pure gradings, useful for our purposes, are developed in Section \ref{se:pure}.

In an attempt to understand this grading on $\frso(2n)$ and to define similar gradings, 
the author used the $13$ lines in the projective plane over the field of 
three elements to split the set of positive roots of $\frso(26)$ (say, over $\CC$) into the disjoint union of pairs of orthogonal roots, thus obtaining the grading in \eqref{eq:D13}.
Checking that this is indeed a set grading is not obvious and relies on a pure
group grading on $\frso(8)$ in Example \ref{ex:D4}. The next step 
was to check whether it is a group grading, but surprisingly it turned out not to be the case (Theorem \ref{th:D13}). Moreover, the result can be extended to the
classical split simple Lie algebra of type $D_{13}$ over any arbitrary field
of characteristic not two (Theorem \ref{th:D13_2}). All this appears in Section 
\ref{se:D13}.

The last Section \ref{se:Steiner} is devoted to obtain similar non-group gradings on
the classical split simple Lie algebras of type $D_{2n}$ with 
$n\equiv 1\pmod{12}$, by
substituting the lines in the projective plane $\PP^2(\FF_3)$ by the blocks 
of Steiner systems of type $S(2,4,n)$.

\smallskip

Throughout the work, all the algebras considered will be assumed to be 
defined over a field $\FF$ and to be finite-dimensional. Up to Theorem \ref{th:D13},
the ground field $\FF$ will be assumed to be algebraically closed of characteristic $0$.

\smallskip

\section{Pure gradings on semisimple Lie algebras}\label{se:pure}

Let $\cL$ be a semisimple Lie algebra over an algebraically closed field $\FF$ of characteristic $0$. Following \cite{Hes}, a grading $\Gamma$ 
by the abelian group $G$: $\Gamma:\cL=\bigoplus_{g\in G}\cL_g$, is said to be
\emph{pure} if there exists a Cartan subalgebra $\cH$ of $\cL$ and an element 
$g\in G$, $g\neq e$, such that $\cH$ is contained in $\cL_g$. (Here $e$ denotes the neutral element of $G$.)

This section follows the ideas in \cite{Hes}.

Let $\Phi$ be the set of roots relative to $\cH$, so that 
$\cL=\cH\oplus\left(\bigoplus_{\alpha\in\Phi}\cL_\alpha\right)$ is the 
root space decomposition of $\cL$, and fix a system of simple roots
$\Delta=\{\alpha_1,\ldots,\alpha_n\}$ relative to the Cartan subalgebra $\cH$. Denote by $\Phi^+$ the set of positive roots relative to $\Delta$.

As in \cite[\S 4]{Hes}, consider the torus $T\bydef\{\varphi\in\Aut(\cL)\mid \varphi\vert_\cH=\id\}$ and its
$2$-torsion part: $T_2\bydef\{\varphi\in T\mid \varphi^2=\id\}$. Consider too the
larger subgroup $T_e\bydef\{\varphi\in\Aut(\cL)\mid \varphi\vert_\cH=\pm\id\}$.

\begin{lemma}\label{le:phiH-id}
Let $\sigma$ be an automorphism of $\cL$ with $\sigma\vert_\cH=-\id$. Then the following conditions hold:
\begin{enumerate}
\item 
The order of $\sigma$ is $2$: $\sigma^2=\id$, $\sigma$ permutes 
$\cL_\alpha$ and $\cL_{-\alpha}$ for all $\alpha\in\Phi$, and a system of generators $e_i\in\cL_{\alpha_i}$, $f_i\in\cL_{-\alpha_i}$, $i=1,\ldots,n$, can be chosen 
such that $[e_i,f_i]=h_i$,  
$\sigma(e_i)=-f_i$, and $\sigma(f_i)=-e_i$, where $h_i\in\cH$ satisfies $\alpha_i(h_i)=2$. 

\item The subgroup $T_e$ is the semidirect product of the torus $T$ and the cyclic
group of order $2$ generated by $\sigma$.

\item The centralizer of $\sigma$ in $T$ is $T_2$.
\end{enumerate}
\end{lemma}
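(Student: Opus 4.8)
The plan is to establish the three assertions in turn, with essentially all of the substance lying in (1); parts (2) and (3) are then short group-theoretic verifications.

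For part (1), I would first record how $\sigma$ interacts with the root space decomposition. Since $\sigma\vert_\cH=-\id$, for $x\in\cL_\alpha$ and $h\in\cH$ one computes $[h,\sigma(x)]=-[\sigma(h),\sigma(x)]=-\sigma([h,x])=-\alpha(h)\sigma(x)$, so $\sigma(\cL_\alpha)\subseteq\cL_{-\alpha}$; as $\sigma$ is bijective and the root spaces are one-dimensional, in fact $\sigma(\cL_\alpha)=\cL_{-\alpha}$. Next I fix $\frsl_2$-triples $e_i\in\cL_{\alpha_i}$, $f_i\in\cL_{-\alpha_i}$, $h_i\in\cH$ with $[e_i,f_i]=h_i$ and $\alpha_i(h_i)=2$; these Chevalley generators generate $\cL$. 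Writing $\sigma(e_i)=a_if_i$ and $\sigma(f_i)=b_ie_i$, the key relation comes from applying $\sigma$ to $[e_i,f_i]=h_i$: the left-hand side is $\sigma(h_i)=-h_i$, while the right-hand side is $[a_if_i,b_ie_i]=-a_ib_i\,h_i$, forcing $a_ib_i=1$.

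From $a_ib_i=1$ it follows at once that $\sigma^2(e_i)=a_ib_i\,e_i=e_i$ and $\sigma^2(f_i)=f_i$; since the $e_i,f_i$ generate $\cL$ and $\sigma^2\vert_\cH=\id$, this gives $\sigma^2=\id$, and $\sigma\neq\id$ (because $\cH\neq0$), so $\sigma$ has order $2$. To achieve the normalization $\sigma(e_i)=-f_i$, $\sigma(f_i)=-e_i$, I would rescale $e_i\mapsto\lambda_i e_i$, $f_i\mapsto\lambda_i^{-1}f_i$, which preserves $[e_i,f_i]=h_i$; a direct computation shows the new $\sigma(e_i)$ equals $\lambda_i^2a_i$ times the new $f_i$, so choosing $\lambda_i$ with $\lambda_i^2=-a_i^{-1}$ (possible since $\FF$ is algebraically closed) yields $\sigma(e_i)=-f_i$, and $a_ib_i=1$ then forces $\sigma(f_i)=-e_i$ as well. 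This is the step I expect to demand the most care, since both the order-two claim and the normalization hinge entirely on the single bracket relation $a_ib_i=1$ and on the freedom to extract a square root.

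For part (2), I would observe that $T_e$ splits as the disjoint union $T\sqcup T\sigma$: any $\varphi\in T_e$ with $\varphi\vert_\cH=-\id$ satisfies $(\varphi\sigma)\vert_\cH=\id$, hence $\varphi\in T\sigma$, whereas $\varphi\vert_\cH=\id$ means $\varphi\in T$. Normality of $T$ in $T_e$ is immediate, since conjugation by any $\varphi\in T_e$ preserves the condition $\psi\vert_\cH=\id$, and $T\cap\langle\sigma\rangle=\{\id\}$ because $\sigma\notin T$; together with $\langle\sigma\rangle\cong\ZZ/2\ZZ$ from (1), this exhibits $T_e=T\rtimes\langle\sigma\rangle$. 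Finally, for part (3) I would use that each $\tau\in T$ acts on $\cL_\alpha$ by a scalar $\chi(\alpha)$, where $\chi$ is a character of the root lattice determined by its values $\chi(\alpha_i)$. Comparing $\tau\sigma(e_i)=-\chi(\alpha_i)^{-1}f_i$ with $\sigma\tau(e_i)=-\chi(\alpha_i)f_i$ shows that $\tau$ centralizes $\sigma$ precisely when $\chi(\alpha_i)^2=1$ for all $i$ (the action on the $f_i$ yields the same condition, and on $\cH$ both sides agree automatically). Since $\tau^2$ acts on $\cL_\alpha$ by $\chi(\alpha)^2$, the identical system $\chi(\alpha_i)^2=1$ characterizes $\tau^2=\id$, so the centralizer of $\sigma$ in $T$ is exactly $T_2$.
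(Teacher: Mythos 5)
Your proposal is correct and takes essentially the same route as the paper's proof: applying $\sigma$ to $[e_i,f_i]=h_i$ to force the reciprocal-scalar relation (the paper's $\mu,\mu^{-1}$, your $a_ib_i=1$), normalizing by a square root of $-a_i^{-1}$ (the paper's $\eta^2=-\mu^{-1}$), decomposing $T_e=T\cup T\sigma$ with $T$ the kernel of $T_e\rightarrow\{\pm 1\}$ for (2), and comparing $\sigma\varphi(e_i)$ with $\varphi\sigma(e_i)$ to get $\mu_i=\mu_i^{-1}$ for (3). You are in places slightly more explicit than the paper (spelling out that $\sigma^2$ fixes a generating set, and verifying both directions in (3)), but the underlying argument is the same.
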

\begin{proof}
This follows from \cite[Lemma 4.3]{Hes}. We include a proof for completeness.
From $\sigma\vert_\cH=-\id$ it follows at once that $\sigma$ permutes 
$\cL_\alpha$ and $\cL_{-\alpha}$ for all $\alpha\in\Phi$. For any 
$\alpha\in\Phi^+$, choose nonzero elements $e_\alpha\in\cL_\alpha$ 
and $f_\alpha\in\cL_{-\alpha}$ such that 
$[e_\alpha,f_\alpha]=h_{\alpha}$ with $\alpha(h_{\alpha})=2$. Then we have
$\sigma(h_\alpha)=-h_\alpha$, $\sigma(e_\alpha)=\mu f_{\alpha}$, 
and $\sigma(f_{\alpha})=\mu^{-1}e_\alpha$ for some 
$0\neq \mu\in\FF$. Let 
$\eta\in\FF$ be a square root of $-\mu^{-1}$. Then we get 
$\sigma(\eta e_\alpha)=-\eta^{-1}f_\alpha$ and 
$\sigma(\eta^{-1}f_\alpha)=-\eta e_\alpha$. This completes the proof of the first part.

For any $\varphi\in T_e$, either $\varphi\vert_\cH=\id$ and hence we have 
$\varphi\in T$, or $\varphi\vert_\cH=-\id$ and hence $\varphi\sigma\in T$. Therefore, $T_e$ equals $T\cup T\sigma$. Besides, $T$ is the kernel of
the natural homomorphism $T_e\rightarrow \{\pm 1\}$, so it is a normal subgroup
of $T_e$. This proves (2).

For any $\varphi\in T$, there are nonzero scalars $\mu_i\in\FF$ such that 
$\varphi(e_i)=\mu_ie_i$ and $\varphi(f_i)=\mu_i^{-1}f_i$ for all 
$i=1,\ldots,n$.
Then we get
\[
\begin{split}
\sigma\varphi(e_i)&=\mu_i\sigma(e_i)=-\mu_if_i,\\
\varphi\sigma(e_i)&=-\varphi(f_i)=-\mu_i^{-1}f_i,
\end{split}
\]
and hence, if $\varphi$ and $\sigma$ commute, we get $\mu_i=\mu_i^{-1}$, so that
$\mu_i^2=1$ for all $i$ and, therefore, $\varphi^2=\id$.
\end{proof}

The automorphism $\sigma$  in item (1) above play a key role in the proof of
the existence of Chevalley bases. (See \cite[\S 25.2]{Humphreys}.)

\begin{proposition}\label{pr:pure}
Let $\cL$ be a semisimple Lie algebra over an algebraically closed field $\FF$ of characteristic $0$. Let $G$ be an abelian group and let $\Gamma$ be a pure 
$G$-grading of $\cL$. Let $g\in G\setminus\{e\}$ such that $\cL_g$ contains 
a Cartan subalgebra $\cH$ of $\cL$. With $T$ and $T_2$ as before, there is an automorphism $\sigma\in\Aut(\cL)$ with 
$\sigma\vert_\cH=-\id$, such that 
$\Diag(\Gamma)$ is the cartesian product of $\Diag(\Gamma)\cap T_2$ and the
subgroup generated by $\sigma$:
\begin{equation}\label{eq:DiagG}
\Diag(\Gamma)=\left(\Diag(\Gamma)\cap T_2\right)\times\langle\sigma\rangle .
\end{equation}
In particular, $\Diag(\Gamma)$ is a finite $2$-elementary group.
\end{proposition}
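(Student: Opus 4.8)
The plan is to pin down $\Diag(\Gamma)$ in three movements: first that every element restricts to $\pm\id$ on $\cH$, then that the $-\id$ value is actually attained (producing $\sigma$), and finally that the whole group is $2$-elementary. Throughout I use that $\Diag(\Gamma)$ is abelian, since its elements act by scalars on the common subspaces $\cL_s$ and hence commute, and that it is identified with the character group of $U(\Gamma)$.

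First I would show $\Diag(\Gamma)\subseteq T_e$. Any $\varphi\in\Diag(\Gamma)$ acts by a single scalar $\lambda\in\FF^\times$ on $\cL_g$, so $\varphi\vert_\cH=\lambda\,\id$. Being an automorphism preserving $\cH$, it permutes the root spaces via $\alpha\mapsto\lambda^{-1}\alpha$, so $\lambda^{-1}\Phi=\Phi$. Writing each $\lambda^{-1}\alpha_i$ as an integral combination of the simple roots shows that multiplication by $\lambda^{-1}$ is given by an integer matrix in the basis $\{\alpha_1,\dots,\alpha_n\}$; as that matrix is the scalar matrix $\lambda^{-1}I$, both $\lambda^{-1}$ and (applying the same to $\varphi^{-1}$) $\lambda$ are integers, forcing $\lambda=\pm1$. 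Thus $\varphi\vert_\cH=\pm\id$, and restriction to $\cH$ defines a homomorphism $\pi\colon\Diag(\Gamma)\to\{\pm1\}$ with kernel $\Diag(\Gamma)\cap T$.

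Next I would produce $\sigma$, equivalently show $\pi$ is onto. Every character $\chi$ of the abelian, finitely generated subgroup $\langle\supp(\Gamma)\rangle$ of $G$ determines an element of $\Diag(\Gamma)$ scaling $\cL_s$ by $\chi(s)$, which by the previous step satisfies $\chi(g)\in\{\pm1\}$. Since $g\neq e$ while $\chi(g)^2=1$ for all such $\chi$, and characters separate points over an algebraically closed field of characteristic $0$, the element $g$ has order exactly $2$ in $\langle\supp(\Gamma)\rangle$; a character with $\chi(g)=-1$ then yields $\sigma\in\Diag(\Gamma)$ with $\sigma\vert_\cH=-\id$, which by Lemma \ref{le:phiH-id} has order $2$ and interchanges $\cL_\alpha$ and $\cL_{-\alpha}$.

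The crux, and the step I expect to be the main obstacle, is that $\Diag(\Gamma)\cap T$ is $2$-torsion; granting this the whole group is $2$-elementary, since any $\varphi\in\Diag(\Gamma)\setminus T$ may be written $\varphi=(\varphi\sigma)\sigma$ with $\varphi\sigma\in\Diag(\Gamma)\cap T$, whence $\varphi^2=(\varphi\sigma)^2\sigma^2=\id$ using that $\Diag(\Gamma)$ is abelian and $\sigma^2=\id$. The difficulty in the torsion claim is that the root spaces $\cL_\alpha$ need not be homogeneous, so the grading scalars of $\varphi$ are not a priori its eigenvalues on root spaces. I would circumvent this with $\sigma$: as $\sigma\in\Diag(\Gamma)$, each $\cL_s$ lies in a single $\sigma$-eigenspace, so if $e_\alpha=\sum_s x_s$ with $x_s\in\cL_s$, then $f_\alpha\bydef\sigma(e_\alpha)\in\cL_{-\alpha}$ has the same homogeneous support, its pieces being $\pm x_s$. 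For $\varphi\in\Diag(\Gamma)\cap T$ one has $\varphi(e_\alpha)=a_\alpha e_\alpha$ and $\varphi(f_\alpha)=a_\alpha^{-1}f_\alpha$ (the second because $[e_\alpha,f_\alpha]\in\cH$ is fixed by $\varphi$); comparing each componentwise against $\varphi(x_s)=\chi_\varphi(s)x_s$ forces $\chi_\varphi(s)=a_\alpha$ and $\chi_\varphi(s)=a_\alpha^{-1}$ on the common support, so $a_\alpha=\pm1$ for every root $\alpha$, and since $\varphi\vert_\cH=\id$ we get $\varphi^2=\id$. With the $2$-elementary property in hand, \eqref{eq:DiagG} follows formally: $\ker\pi=\Diag(\Gamma)\cap T=\Diag(\Gamma)\cap T_2$, the surjection $\pi$ splits through $\langle\sigma\rangle$ with $\langle\sigma\rangle\cap\ker\pi=\{\id\}$, and an abelian group that is the product of these two subgroups is their internal direct product; finiteness is immediate, since a finitely generated $2$-elementary group is finite.
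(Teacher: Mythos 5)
Your proof is correct and follows essentially the same route as the paper's: you show every element of $\Diag(\Gamma)$ restricts to $\pm\id$ on $\cH$ (your lattice-integrality argument is a minor variant of the paper's ``the only scalar multiples of a root $\alpha$ are $\pm\alpha$''), you produce $\sigma$ by a character separating $g$ from $e$ exactly as the paper does, and your componentwise computation along homogeneous supports is precisely the content of Lemma \ref{le:phiH-id}(3), which the paper simply cites after noting that the abelian group $\Diag(\Gamma)$ centralizes $\sigma$. One cosmetic remark: finiteness at the end should be deduced not from finite generation of $\Diag(\Gamma)$ (which you never establish) but from the finiteness of $T_2\simeq\Hom(Q/2Q,\{\pm1\})\simeq(\ZZ/2)^n$, which is immediate from the containment $\Diag(\Gamma)\subseteq T_2\times\langle\sigma\rangle$ that your argument already yields.
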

\begin{proof}
Without loss of generality, we may assume that $G$ is the universal group 
$U(\Gamma)$. Then the group $\Diag(\Gamma)$ can be identified with the group of
characters of $G$.

Let $\chi$ be a character of $G$ with $\chi(g)\neq 1$. The automorphism $\sigma$ of $\Diag(\Gamma)$ 
defined by $\sigma(x)=\chi(h)x$ for any $h\in G$ and $x\in\cL_h$ satisfies that its restriction to $\cH$ is $\chi(g)\id$. This forces  
$\chi(g)^{-1}\alpha$ to be a root for any root $\alpha$ relative to 
$\cH$. As the only scalar multiples of a root $\alpha$ are $\pm\alpha$,
we get $\chi(g)=-1$, and hence $\sigma\vert_\cH=-\id$. This also shows
that $\chi(g)=\pm 1$ for any character
$\chi$ of $G$,  and hence $\Diag(\Gamma)$ is contained in $T_e$.
We conclude that $\Diag(\Gamma)$ is contained in the centralizer of 
$\sigma$ in 
$T_e$, which is $T_2\times\langle\sigma\rangle$ by Lemma \ref{le:phiH-id}.(3).
The result follows.
\end{proof}

From now on, fix a Cartan subalgebra $\cH$ of $\cL$ and an automorphism 
$\sigma\in\Aut(\cL)$ such that $\sigma\vert_\cH=-\id$. Note that $\sigma$ is unique up to conjugation because of Lemma \ref{le:phiH-id}.(1). Denote by $Q$ the root lattice $Q\bydef\ZZ\Phi=\ZZ\Delta$ (notation as above). The torus $T$ is naturally isomorphic to the group of characters of $Q$, and this isomorphism restricts to 
a group isomorphism $T_2\simeq \Hom(Q/2Q,\{\pm 1\})$. Any 
$\chi\in\Hom(Q/2Q,\{\pm 1\})$ corresponds to the automorphism $\tau_\chi$ whose
restriction to $\cL_\alpha$ is $\chi(\alpha +2Q)\id$ for all $\alpha\in\Phi$.
In particular, any element of $T_2$ acts a $\pm\id$ on 
the two-dimensional subspace $\cL_\alpha+\cL_{-\alpha}$ for any $\alpha\in\Phi^+$.

This gives a bijection:
\begin{equation}\label{eq:T2}
\begin{split}
\{\text{subgroups of $T_2$}\}&\longrightarrow 
   \{\text{subgroups of $Q$ containing $2Q$}\}\\
   S\quad&\mapsto\quad E\ 
        \text{such that $E/2Q=\bigcap_{\tau_\chi\in S}\ker\chi$}\,.
\end{split}
\end{equation}
In the reverse direction, a subgroup $E$ with $2Q\leq E\leq Q$ corresponds to
the subgroup $T_E\bydef\{\tau_\chi\mid \chi(E/2Q)=1\}$. 

For any positive root $\alpha\in\Phi^+$, pick a nonzero element 
$x_\alpha\in\cL_\alpha$. Let $E$ be a subgroup with $2Q\leq E\leq Q$ and denote by 
$\overline{Q}_E$ the 
$2$-elementary group $Q/E\times \ZZ/2$. Define the $\overline{Q}_E$-grading 
$\Gamma_E$ on $\cL$ as follows:
\begin{equation}\label{eq:GammaE}
\begin{split}
\cL_{(q+E,\bar 0)}
 &=\bigoplus_{\alpha\in\Phi^+\cap(q+E)}\FF\left(x_\alpha+\sigma(x_\alpha)\right),\\
\cL_{(q+E,\bar 1)}
  &=\begin{cases}
   \bigoplus_{\alpha\in\Phi^+\cap(q+E)}\FF\left(x_\alpha-\sigma(x_\alpha)\right)
       &\text{if $q+E\neq E$,}\\
  \cH\oplus\left(\bigoplus_{\alpha\in\Phi^+\cap(q+E)}
           \FF\left(x_\alpha-\sigma(x_\alpha)\right)\right)
           &\text{if $q+E=E$.}
        \end{cases}
\end{split}
\end{equation}

\begin{remark}\label{re:GammaE}
Note that the homogeneous spaces of $\Gamma_E$ are the common eigenspaces for 
the action of 
the group $T_E\times\langle\sigma\rangle$.
\end{remark}

The extreme cases are $\Gamma_Q$, which is a grading by $\ZZ/2$ with 
$\cL\subo=\{x\in\cL\mid \sigma(x)=x\}$ and 
$\cL\subuno=\{x\in\cL\mid \sigma(x)=-x\}$, and $\Gamma_{2Q}$, which is a grading
by $Q/2Q\times\ZZ/2\simeq \left(\ZZ/2\right)^{n+1}$.


For any $E$ as above, define the subgroup $E^\circ$ as follows:
\begin{equation}\label{eq:Ecirc}
E^\circ\bydef 2Q +\ZZ(\Phi^+\cap E)+\ZZ\{\alpha-\beta\mid \alpha,\beta\in\Phi^+\ 
\text{and}\ \alpha-\beta\in E\}.
\end{equation}
Note that $2Q\leq E^\circ\leq E\leq Q$.

\begin{remark}\label{re:Ecirc}
Another way to define $E^\circ$ is as the subgroup of $E$ generated by $2Q$, 
by the elements $\alpha\in\Phi^+$ such that 
$x_\alpha-\sigma(x_\alpha)$ is 
in the homogeneous component of $\Gamma_E$ that contains 
$\cH$, and by the elements
$\alpha-\beta$ for $\alpha,\beta\in\Phi^+$ such that 
$x_\alpha-\sigma(x_\alpha)$
and $x_\beta-\sigma(x_\beta)$ are in the same homogeneous component of $\Gamma_E$.
\end{remark}

\begin{proposition}\label{pr:DiagGE}
Under the conditions above, the gradings $\Gamma_E$ and $\Gamma_{E^\circ}$ are equivalent and $\Diag(\Gamma_E)=\Diag(\Gamma_{E^\circ})$ equals 
$T_{E^\circ}\times\langle\sigma\rangle$.
\end{proposition}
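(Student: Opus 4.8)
The plan is to reduce both assertions to the description of $\Diag(\Gamma_E)$ furnished by Proposition \ref{pr:pure}, and then to extract them from a single character computation. First I would record that $\Gamma_E$ is a \emph{pure} grading by the abelian group $\overline{Q}_E=Q/E\times\ZZ/2$: indeed $\cH\subseteq\cL_{(E,\bar 1)}$ and $(E,\bar 1)$ is not the neutral element $(E,\bar 0)$. Moreover the fixed automorphism $\sigma$ lies in $\Diag(\Gamma_E)$: since $\sigma^2=\id$ and $\sigma|_\cH=-\id$, one has $\sigma\bigl(x_\alpha\pm\sigma(x_\alpha)\bigr)=\pm\bigl(x_\alpha\pm\sigma(x_\alpha)\bigr)$, so $\sigma$ acts as $+\id$ on every $\bar 0$-component and as $-\id$ on every $\bar 1$-component; this is exactly the automorphism produced in Proposition \ref{pr:pure} for the parity character. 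Hence that proposition applies and gives $\Diag(\Gamma_E)=\bigl(\Diag(\Gamma_E)\cap T_2\bigr)\times\langle\sigma\rangle$, reducing the whole problem to the identification of $\Diag(\Gamma_E)\cap T_2$.

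The heart of the argument is this identification. An element $\tau_\chi\in T_2$ fixes $\cH$ pointwise and, because $\chi(-\alpha+2Q)=\chi(\alpha+2Q)$, acts on each line $\FF\bigl(x_\alpha+\sigma(x_\alpha)\bigr)$ and $\FF\bigl(x_\alpha-\sigma(x_\alpha)\bigr)$ by the scalar $\chi(\alpha+2Q)$. Thus $\tau_\chi\in\Diag(\Gamma_E)$ precisely when it acts by one scalar on each component of \eqref{eq:GammaE}. The components $\cL_{(q+E,\bar 0)}$ and the components $\cL_{(q+E,\bar 1)}$ with $q+E\neq E$ mix the lines indexed by all $\alpha\in\Phi^+\cap(q+E)$, forcing $\chi(\alpha-\beta)=1$ whenever $\alpha,\beta\in\Phi^+$ share a coset, that is, whenever $\alpha-\beta\in E$. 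The distinguished component $\cL_{(E,\bar 1)}$ also contains $\cH$, on which $\tau_\chi$ acts trivially, and this yields the additional condition $\chi(\alpha)=1$ for every $\alpha\in\Phi^+\cap E$. Together with $\chi(2Q)=1$, these relations are exactly the vanishing of $\chi$ on $E^\circ/2Q$, by the description \eqref{eq:Ecirc}; conversely they suffice. Hence $\Diag(\Gamma_E)\cap T_2=T_{E^\circ}$ and $\Diag(\Gamma_E)=T_{E^\circ}\times\langle\sigma\rangle$.

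Next I would show that the operation $\circ$ is idempotent, $(E^\circ)^\circ=E^\circ$. The inclusion $(E^\circ)^\circ\subseteq E^\circ$ is immediate from $E^\circ\subseteq E$. For the reverse inclusion I would verify that each generator of $E^\circ$ listed in \eqref{eq:Ecirc} already lies in $E^\circ$, hence is a legitimate generator of $(E^\circ)^\circ$: a root $\alpha\in\Phi^+\cap E$ belongs to $E^\circ$, so to $\Phi^+\cap E^\circ$; and a difference $\alpha-\beta\in E$ belongs to $E^\circ$, so is among the differences allowed when forming $(E^\circ)^\circ$. Applying the computation of the previous paragraph verbatim with $E^\circ$ in place of $E$ then gives $\Diag(\Gamma_{E^\circ})=T_{(E^\circ)^\circ}\times\langle\sigma\rangle=T_{E^\circ}\times\langle\sigma\rangle=\Diag(\Gamma_E)$, which is the second assertion.

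Finally, for the equivalence I would prove that $\Gamma_E$ and $\Gamma_{E^\circ}$ have literally the same homogeneous components. By Remark \ref{re:GammaE} the components of $\Gamma_{E^\circ}$ are the common eigenspaces of $T_{E^\circ}\times\langle\sigma\rangle$, and those of $\Gamma_E$ are the common eigenspaces of $T_E\times\langle\sigma\rangle$. Since $E^\circ\subseteq E$ gives $T_E\subseteq T_{E^\circ}$, the eigenspaces of $T_{E^\circ}\times\langle\sigma\rangle$ refine the components of $\Gamma_E$. On the other hand $\Diag(\Gamma_E)=T_{E^\circ}\times\langle\sigma\rangle$ acts by scalars on each component of $\Gamma_E$, so its common eigenspaces are unions of those components, hence coarsen them. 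Being at once a refinement and a coarsening, these eigenspaces coincide with the components of $\Gamma_E$, and therefore with the components of $\Gamma_{E^\circ}$; the two gradings split $\cL$ into the same subspaces and are equivalent via the identity map. I expect the genuine obstacle to be the computation of the second paragraph, and specifically the asymmetric role of $\cL_{(E,\bar 1)}$: it is the forced trivial action of $\tau_\chi$ on $\cH$ sitting inside that component that produces the generators $\Phi^+\cap E$ of $E^\circ$ rather than only the differences $\alpha-\beta$, and checking that the listed relations are not merely necessary but also sufficient is the delicate bookkeeping step; idempotency of $\circ$ and the refinement/coarsening argument are then routine.
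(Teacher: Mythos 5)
Your proof is correct, and its core --- the character computation showing that $\tau_\chi\in\Diag(\Gamma_E)$ if and only if $\chi$ kills $E^\circ/2Q$, after reducing via Proposition \ref{pr:pure} to $\Diag(\Gamma_E)=\bigl(\Diag(\Gamma_E)\cap T_2\bigr)\times\langle\sigma\rangle$ --- is exactly the paper's argument, down to the observation that the $\cH$-part of $\cL_{(E,\bar 1)}$ is what forces the generators $\Phi^+\cap E$ (your explicit identification of $\sigma$ with the diagonal automorphism of the parity character is a detail the paper glosses over, and it is a worthwhile addition). Where you genuinely diverge is in the remaining two claims: you first prove idempotency $(E^\circ)^\circ=E^\circ$, rerun the computation to get $\Diag(\Gamma_{E^\circ})=T_{E^\circ}\times\langle\sigma\rangle$, and only then deduce the equivalence by sandwiching the common eigenspaces of $T_{E^\circ}\times\langle\sigma\rangle$ between a refinement and a coarsening of the components of $\Gamma_E$. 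The paper instead disposes of the equivalence in one line at the outset: by \eqref{eq:Ecirc}, two positive roots lie in the same coset modulo $E$ if and only if they do so modulo $E^\circ$ (and $\alpha\in E\Leftrightarrow\alpha\in E^\circ$ for $\alpha\in\Phi^+$), so by \eqref{eq:GammaE} the homogeneous components of $\Gamma_E$ and $\Gamma_{E^\circ}$ \emph{literally coincide}; since $\Diag$ depends only on the set of components, $\Diag(\Gamma_E)=\Diag(\Gamma_{E^\circ})$ comes for free and a single character computation finishes the proof. Your route costs an extra lemma and the eigenspace argument (both of which you execute correctly), but it buys a conceptual explanation, via Remark \ref{re:GammaE}, of \emph{why} the components must agree --- $E^\circ$ is precisely the subgroup whose torus recovers the eigenspace decomposition --- whereas the paper's shortcut is the cleaner observation that passing from $E$ to $E^\circ$ does not change the induced partition of $\Phi^+$.
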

\begin{proof}
The homogeneous components of $\Gamma_E$ and $\Gamma_{E^\circ}$ coincide, so they are trivially equivalent.

Proposition \ref{pr:pure} gives $\Diag(\Gamma_E)=\left(\Diag(\Gamma_E)\cap T_2\right)\times \langle \sigma\rangle$. For any character $\chi\in\Hom(Q/2Q,\{\pm 1\})$, 
let $\tau_\chi$ be the associated element in $T_2$. For any $\alpha\in\Phi^+$, 
$\tau_\chi\left(x_\alpha\pm\sigma(x_\alpha)\right)
=\chi(\alpha+2Q)\left(x_\alpha\pm\sigma(x_\alpha)\right)$.
Then $\tau_\chi$ belongs to $\Diag(\Gamma_E)$ if and only if $\chi(\alpha+2Q)=1$
for any $\alpha\in\Phi^+\cap E$ (because $\tau_\chi\vert_\cH=\id$), and
$\chi(\alpha+2Q)=\chi(\beta+2Q)$ for any $\alpha,\beta\in\Phi^+$ with 
$\alpha-\beta\in E$. In other words, $\tau_\chi$ lies in $\Diag(\Gamma_E)$ if
and only if $E^\circ/2Q$ is contained in $\ker\chi$, if and only if $\tau_\chi$ lies in $T_{E^\circ}$.
\end{proof}

Let us show now that $E^\circ$ may be strictly contained in $E$.

\begin{example}\label{ex:D6}
Let $\cL$ be the orthogonal Lie algebra $\frso(V,\bup)$ of a vector space $V$ of
dimension $12$, endowed with a nondegenerate symmetric bilinear form $\bup$. 
Pick a basis $\{u_1,\ldots,u_6,v_1,\ldots,v_6\}$ of $V$ with 
$\bup(u_i,u_j)=0=\bup(v_i,v_j)$ and $\bup(u_i,v_j)=\delta_{ij}$ (Kronecker delta) for all $i,j$. 
The diagonal elements of $\frso(V,\bup)$ relative to this basis form a Cartan subalgebra $\cH$, and the weights of $V$ relative to $\cH$ are 
$\pm\veps_1,\ldots,\pm\veps_6$, where 
$h.u_i=\veps_i(h)u_i$, $h.v_i=-\veps_i(h)v_i$ for all $h\in\cH$ and $i=1,\ldots,6$. The set of roots is $\Phi=\{\pm\veps_i\pm\veps_j\mid i\neq j\}$. 
Up to a scalar, the nondegenerate bilinear form on $\cH^*$ induced 
by the Killing form is given by $(\veps_i\vert\veps_j)=\delta_{ij}$ (Kronecker delta).

As a system of simple roots
take $\Delta=\{\veps_1-\veps_2,\veps_2-\veps_3,\veps_3-\veps_4,\veps_4-\veps_5,
\veps_5-\veps_6,\veps_5+\veps_6\}$. The highest weight of one of the half-spin
modules es $\frac{1}{2}(\veps_1+\cdots+\veps_6)$. Denote by 
$\Lambda'$ the set 
of  weights of this half spin module: 
$\Lambda'=\bigl\{\frac12(\pm\veps_1\pm\cdots\pm\veps_6)\mid
\text{even number of $+$ signs}\bigr\}$. Write $W'=\ZZ\Lambda'$. Then $E=2W'$ satisfies 
$2Q\leq E=
2Q+\ZZ(\veps_1+\cdots+\veps_6)\leq Q$.

It follows at once that for any $i<j$, $r<s$ and $(i,j)\neq (r,s)$, the element
$\frac12\bigl((\veps_i\pm\veps_j)-(\veps_r\pm\veps_s)\bigr)$ does not belong neither to $Q$, nor to $\frac12(\veps_1+\cdots\veps_6)+Q$. Hence we get that for any two different positive roots 
$\alpha,\beta\in\Phi^+$, we get $\alpha-\beta\not\in E$. It is also clear that
$\alpha$ does not belong to $E$ for any $\alpha\in\Phi^+$, so \eqref{eq:Ecirc}
shows that $E^\circ$ equals $2Q$.
\end{example}

\begin{remark}\label{re:caution}
A word of caution is in order here. The notion of equivalence in 
\cite{Hes} is more restrictive. It turns out that
 $\Gamma_E$ and 
$\Gamma_{E^\circ}$ are not equivalent in this more restrictive sense  if $E^\circ$ is contained properly in $E$, as in Example \ref{ex:D6}, even though their
homogeneous components coincide, so they are trivially equivalent in our sense.
\end{remark}

\smallskip

The situation is different for $D_4$, and this will be crucial in our examples of non-group gradings in the next sections.

\begin{example}\label{ex:D4}
Let $\cL$ be the orthogonal Lie algebra $\frso(V,\bup)$ of a vector space $V$ of
dimension $8$, endowed with a nondegenerate symmetric bilinear form 
$\bup$. As in Example \ref{ex:D6},
pick a basis $\{u_1,\ldots,u_4,v_1,\ldots,v_4\}$ of $V$ with 
$\bup(u_i,u_j)=0=\bup(v_i,v_j)$ and $\bup(u_i,v_j)=\delta_{ij}$ for all $i,j$, and consider the corresponding Cartan subalgebra of diagonal elements relative to this basis, and the  system of simple roots
$\Delta=\{\veps_1-\veps_2,\veps_2-\veps_3,\veps_3-\veps_4,
\veps_3+\veps_4\}$. Pick the highest weight 
$\frac12(\veps_1+\veps_2+\veps_3+\veps_4)$
of one of the half-spin modules, and let $\Lambda'$ be its set of weights. Finally, write $W'=\ZZ\Lambda'$ and consider the subgroup 
$E=2W'=2Q+\ZZ(\veps_1+\veps_2+\veps_3+\veps_4)$, 
whose index $[E:2Q]$ is $2$.

For any $1\leq r<s\leq 4$, let $\bar r<\bar s$ be such that
 $\{r,s,\bar r,\bar s\}=\{1,2,3,4\}$. Then the positive roots 
$\veps_r+\veps_s$ and $\veps_{\bar r}+\veps_{\bar s}$ 
satisfy that its difference belongs to $E$, and the same happens with
$\veps_1+\veps_2+\veps_3+\veps_4=(\veps_r+\veps_s)-(\veps_{\bar r}+\veps_{\bar s})+2(\veps_{\bar r}+\veps_{\bar s})$. Therefore in this case we get $E=E^\circ$.

Proposition \ref{pr:DiagGE} shows that the diagonal group of the
grading $\Gamma_E$ is $T_E\times\langle\sigma\rangle$, which is 
isomorphic to $\left(\ZZ/2\right)^4$.
\end{example}

Our last example is Panayuk's example mentioned in Section \ref{se:intro}.

\begin{example}\label{ex:Dn}
Let $\cL$ be the orthogonal Lie algebra $\frso(V,\bup)$ of a vector space $V$ of
dimension $2n$, endowed with a nondegenerate symmetric bilinear form $\bup$. 
Pick a basis $\{u_1,\ldots,u_n,v_1,\ldots,v_n\}$ of $V$ with 
$\bup(u_i,u_j)=0=\bup(v_i,v_j)$ and $\bup(u_i,v_j)=\delta_{ij}$ for all 
$i,j$ as in Example \ref{ex:D6}, and use the notation there. Let 
$W=\ZZ\veps_1\oplus\cdots\oplus\ZZ\veps_n$ be the $\ZZ$-span of the 
weights of the natural module $V$. Note that $W=Q+\ZZ\veps_1$ and 
$2\veps_1=(\veps_1-\veps_2)+(\veps_1+\veps_2)$ belongs to $Q$.
Hence the index $[W:Q]$ is $2$. Consider the subgroup $E=2W$, which lies between $2Q$ and $Q$, and the associated grading 
$\Gamma_{2W}$ over $Q/(2W)\times \ZZ/2\simeq \left(\ZZ/2\right)^n$
in \eqref{eq:GammaE}. 

The intersection $\Phi^+\cap 2W$ is empty, and the only pairs of positive roots that belong to the same coset modulo $2W$ are
$\veps_i-\veps_j$ and $\veps_i+\veps_j$ for $1\leq i< j\leq n$.
It follows that $E^\circ=E$ and that the homogeneous component
$\cL_{(E,\bar 1)}$ coincides with the Cartan subalgebra, while all the other nonzero homogeneous components have dimension two.
\end{example}

\begin{remark}\label{re:triality}
The same properties are valid for the grading in Example \ref{ex:D4}. But for $n=4$, the natural and half-spin modules are related by triality, so the grading in Example \ref{ex:D4} is equivalent to the one in 
Example \ref{ex:Dn} for $n=4$. However, the specific $E$ in
Example \ref{ex:D4} will be crucial in the next section (proof of Proposition \ref{pr:set_grading}).
\end{remark}

\smallskip

\section{A non-group grading on $D_{13}$}\label{se:D13}

We may number the points in the projective plane $\PP^2(\FF_3)$
over the field $\FF_3$ of three elements from 
$1$ to $13$, so that the lines in $\PP^2(\FF_3)$ are the ones consisting of the points:
\begin{equation}\label{eq:lines}
\begin{matrix}
\{1,2,3,4\}&\{2,5,8,11\}&\{3,5,9,13\}&\{4,5,10,12\}\\
\{1,5,6,7\}&\{2,6,9,12\}&\{3,6,10,11\}&\{4,6,8,13\}\\
\{1,8,9,10\}&\{2,7,10,13\}&\{3,7,8,12\}&\{4,7,9,11\}\\
\{1,11,12,13\}&&&
\end{matrix}
\end{equation}
(The reader can check that any two points lie in a unique line, and any two lines intersect in a unique point.) Denote by $\mathfrak{L}$ this set of lines.

As in Examples \ref{ex:D6} and \ref{ex:D4}, let $\cL$ be the orthogonal Lie algebra $\frso(V,\bup)$ of a vector space $V$ of
dimension $26$, endowed with a nondegenerate symmetric bilinear form $\bup$. 
Pick a basis $\{u_1,\ldots,u_{13},v_1,\ldots,v_{13}\}$ of $V$ with 
$\bup(u_i,u_j)=0=\bup(v_i,v_j)$ and $\bup(u_i,v_j)=\delta_{ij}$ for all $i,j$. 
The diagonal elements of $\frso(V,\bup)$ relative to this basis form a Cartan subalgebra $\cH$, and the weights of $V$ relative to $\cH$ are 
$\pm\veps_1,\ldots,\pm\veps_{13}$, where 
$h.u_i=\veps_i(h)u_i$, $h.v_i=-\veps_i(h)v_i$ for all $h\in\cH$ and 
$i=1,\ldots,13$. The set of roots is 
$\Phi=\{\veps_i\pm\veps_j\mid i\neq j\}$. As a system of simple roots
take $\Delta=\{\veps_1-\veps_2,\veps_2-\veps_3,\veps_3-\veps_4,\dots,
\veps_{12}-\veps_{13},\veps_{12}+\veps_{13}\}$.

Fix an automorphism $\sigma$ of $\cL$ with $\sigma\vert_\cH=-\id$, and for any positive root $\alpha\in\Phi^+$ pick a nonzero element
$x_\alpha\in\cL_\alpha$.

For any line $\ell=\{i,j,k,l\}\in\frL$, with $i<j<k<l$ consider the set
$\frP_{\ell}$ whose elements are the following six subsets of pairs of orthogonal positive  roots
\begin{equation}\label{eq:six}
\begin{matrix}
\{\veps_i+\veps_j,\veps_k+\veps_l\},&\ 
\{\veps_i+\veps_k,\veps_j+\veps_l\},&\ 
\{\veps_i+\veps_l,\veps_j+\veps_k\},\\
\{\veps_i-\veps_j,\veps_k-\veps_l\},&\ 
\{\veps_i-\veps_k,\veps_j-\veps_l\},&\ 
\{\veps_i-\veps_l,\veps_j-\veps_k\}.
\end{matrix}
\end{equation}
Denote by $\frP$ the union $\frP=\bigcup_{\ell\in\frL}\frP_{\ell}$. Its
elements are subsets consisting of a pair  of orthogonal positive roots. Hence 
$\frP$ contains $13\times 6=78$ elements, and each positive root appears in exactly one of the pairs in $\frP$. Then $\cL$ decomposes as
\begin{multline}\label{eq:D13}
\cL=\cH\oplus\Bigl(\bigoplus_{\{\alpha,\beta\}\in\frP}
       \bigl(\FF(x_\alpha+\sigma(x_\alpha))
          +\FF(x_\beta+\sigma(x_\beta))\bigr)\Bigr) \\\oplus
      \Bigl(\bigoplus_{\{\alpha,\beta\}\in\frP}
       \bigl(\FF(x_\alpha-\sigma(x_\alpha))
          +\FF(x_\beta-\sigma(x_\beta))\bigr)\Bigr).
\end{multline}
Hence $\cL$  decomposes into the direct sum of the Cartan subalgebra and the direct sum of $13\times 6\times 2=156$ two-dimensional abelian subalgebras. The reader may recall here Example \ref{ex:Dn}, where a group-grading with these properties is highlighted.

\begin{proposition}\label{pr:set_grading}
The decomposition in Equation \eqref{eq:D13} is a set grading of 
$\cL$.
\end{proposition}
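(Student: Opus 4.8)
The plan is to check the set-grading condition directly on each unordered pair of homogeneous components of \eqref{eq:D13}. Throughout write $x_\alpha^{+}:=x_\alpha+\sigma(x_\alpha)$ and $x_\alpha^{-}:=x_\alpha-\sigma(x_\alpha)$, so that the non-Cartan components are $\FF x_\alpha^{+}+\FF x_\beta^{+}$ and $\FF x_\alpha^{-}+\FF x_\beta^{-}$ for $\{\alpha,\beta\}\in\frP$; the two facts I will use repeatedly are that $\sigma$ interchanges $\cL_\alpha$ and $\cL_{-\alpha}$ and that $\sigma^2=\id$. Brackets with the Cartan component are immediate: for $h\in\cH$ one has $[h,x_\alpha^{+}]=\alpha(h)\,x_\alpha^{-}$ and $[h,x_\alpha^{-}]=\alpha(h)\,x_\alpha^{+}$, so $[\cH,\FF x_\alpha^{+}+\FF x_\beta^{+}]\subseteq \FF x_\alpha^{-}+\FF x_\beta^{-}$ and symmetrically, while $[\cH,\cH]=0$. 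Thus every Cartan bracket lands in a single component.

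For brackets between two root-type components attached to pairs $\{\alpha,\beta\},\{\gamma,\delta\}\in\frP$, the organizing observation is that each pair of $\frP$ uses the four points of a unique line of $\frL$, hence lies in exactly one $\frP_\ell$. This splits the verification into two cases. If both pairs come from a single line $\ell=\{i,j,k,l\}$, then the four roots and every root occurring in their brackets are supported on $\{\veps_i,\veps_j,\veps_k,\veps_l\}$, i.e. they live in the subalgebra $\cL_\ell\cong\frso(8)$ spanned by $\cH_\ell:=\cH\cap\cL_\ell$ and the root spaces $\cL_{\pm\veps_a\pm\veps_b}$, $a,b\in\ell$. The restriction of \eqref{eq:D13} to $\cL_\ell$ is exactly the grading $\Gamma_E$ of Example \ref{ex:D4}: the six pairs \eqref{eq:six} are the six pairs into which $\Gamma_E$ groups the positive roots of $D_4$, the component containing the Cartan is $\cH_\ell$ (because $\Phi^+\cap E=\emptyset$), and the global $\sigma$ restricts to the relevant order-two automorphism. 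Since $\Gamma_E$ is a genuine group grading, the bracket of any two of its components is contained in a single $\Gamma_E$-component, which is one of our components $\FF x_\alpha^{\pm}+\FF x_\beta^{\pm}$ or $\cH_\ell\subseteq\cH$. So in this case the condition is inherited from Example \ref{ex:D4}.

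If the two pairs come from distinct lines $\ell_1\neq\ell_2$, these meet in a unique point $p$. In each pair exactly one root contains the index $p$; call them $\mu\in\{\alpha,\beta\}$ and $\nu\in\{\gamma,\delta\}$. Every other combination of one root from each pair has disjoint supports, because $\ell_1\cap\ell_2=\{p\}$, so those brackets vanish and the whole bracket is spanned by the single $[x_\mu^{s},x_\nu^{s'}]$ with $s,s'\in\{+,-\}$. Writing $\mu=s_1\veps_p+s_2\veps_a$ and $\nu=s_1'\veps_p+s_3\veps_d$ with $a\in\ell_1\setminus\{p\}$, $d\in\ell_2\setminus\{p\}$ and $a\neq d$, exactly one of $\mu\pm\nu$ is a root, namely the combination in which the $\veps_p$-terms cancel; call it $\rho$, supported on $\{a,d\}$, so that among the four destinations $\cL_{\pm\mu\pm\nu}$ only $\cL_\rho$ and $\cL_{-\rho}$ are nonzero. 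Hence only two terms survive in the expansion of $[x_\mu^{s},x_\nu^{s'}]$; they lie in $\cL_\rho$ and $\cL_{-\rho}$ and are interchanged by $\sigma$ (as $\sigma^2=\id$). A direct computation then shows that $[x_\mu^{s},x_\nu^{s'}]$ is a scalar multiple of $x_{\rho_0}^{+}$ when $s=s'$ and of $x_{\rho_0}^{-}$ when $s\neq s'$, where $\rho_0$ denotes the positive root among $\pm\rho$; in either case it lies in the single component of \eqref{eq:D13} indexed by the $\frP$-pair containing $\rho_0$.

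The hard part is the same-line case, and it is there that the specific pairing \eqref{eq:six} is essential. Two components from one line generally produce, in their bracket, two distinct root spaces $\cL_{\pm\rho}$ and $\cL_{\pm\rho'}$, and the set-grading condition forces $\rho$ and $\rho'$ to be grouped into the same pair of $\frP$. The naive $\sigma$-computation of the different-lines case cannot deliver this, since more than one root space now occurs; what saves the day is precisely that \eqref{eq:six} realizes the group grading $\Gamma_E$ on $\frso(8)$, so that brackets close within single $(\ZZ/2)^4$-homogeneous components. Consequently the one genuinely load-bearing step is the identification, in the same-line case, of the restricted decomposition with $\Gamma_E$ of Example \ref{ex:D4} (matching the simple-root conventions and the subgroup $E=2W'$); everything else is the routine bookkeeping sketched above.
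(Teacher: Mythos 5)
Your proof is correct and follows essentially the same route as the paper's: you handle brackets with $\cH$ and within a $\frP$-pair directly, reduce the case of two distinct lines (necessarily meeting in a single point) to the unique surviving root $\pm(\mu\mp\nu)$ via a $\sigma$-equivariance computation, and settle the same-line case by identifying the induced decomposition on the $\frso(8)$-subalgebra with the group grading $\Gamma_E$ of Example \ref{ex:D4}, which is exactly the paper's key step. Your explicit check that the surviving bracket is a scalar multiple of $x_{\rho_0}^{+}$ or $x_{\rho_0}^{-}$ just spells out what the paper states more tersely.
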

\begin{proof}
For any $h\in\cH$ and $\alpha\in\Phi^+$, 
$[h,x_\alpha\pm\sigma(x_\alpha)]
=\alpha(h)(x_\alpha\mp\sigma(x_\alpha))$. Also, for any two orthogonal positive roots $\alpha,\beta$, 
$[x_\alpha\pm\sigma(x_\alpha),x_\beta\pm\sigma(x_\beta)]=0$, because $\alpha\pm\beta$ is not a root. We must check that the bracket
of two subspaces of the form  $\FF(x_\alpha\pm\sigma(x_\alpha))+\FF(x_\beta\pm\sigma(x_\beta))$, with $\{\alpha,\beta\}\in\frP$, is contained in another subspace of this form. So let us take two such elements $\{\alpha,\beta\}\in\frP_{\ell_1}$ and $\{\alpha',\beta'\}\in\frP_{\ell_2}$. There are two different possibilities:
\begin{itemize}
\item 
$\ell_1\cap\ell_2$ consists of a single point: $\ell_1=\{i,j,k,l\}$, 
$\ell_2=\{i,p,q,r\}$ for distinct $i,j,k,l,p,q,r$. This shows that, up to reordering, $(\alpha\vert\alpha')\neq 0$, but $(\alpha\vert\beta')=0=
(\alpha'\vert\beta)$. This is because, up to reordering, the roots involved are of the form $\alpha=\pm\veps_i\pm\veps_j$, 
$\beta=\pm\veps_k\pm\veps_l$, $\alpha'=\pm\veps_i\pm\veps_p$, 
$\beta=\pm\veps_q\pm\veps_r$. Since $(\alpha\vert\alpha')\neq 0$, 
either $\alpha+\alpha'$ or $\alpha-\alpha'$ is a root (but not both), while 
$\alpha\pm\beta'$ and $\alpha'\pm\beta$ are not roots. 
It follows that the bracket of $\FF(x_\alpha\pm\sigma(x_\alpha))+
\FF(x_\beta\pm\sigma(x_\beta))$ with 
$\FF(x_{\alpha'}\pm\sigma(x_{\alpha'}))
+\FF(x_{\beta'}\pm\sigma(x_{\beta'}))$ equals
$\FF[x_\alpha\pm\sigma(x_\alpha),x_{\alpha'}\pm\sigma(x_{\alpha'})]$,
which equals 
$\FF\left(x_{\alpha\pm\alpha'}\pm\sigma(x_{\alpha\pm\alpha'})\right)$, depending on whether $\alpha+\alpha'$ or $\alpha-\alpha'$ is a root.

\item 
$\ell_1=\ell_2$. Without loss of generality we may assume that
$\ell=\{1,2,3,4\}$. Then the subalgebra generated by $x_\alpha$ and
$\sigma(x_\alpha)$ for $\alpha$ in the six subsets associated to $\ell$ generate a subalgebra of $\cL$ isomorphic to $D_4$. The decomposition induced on this subalgebra from the decomposition in \eqref{eq:D13} is exactly the group-grading $\Gamma_E$ in Example \ref{ex:D4}, and hence the bracket of $\FF(x_\alpha\pm\sigma(x_\alpha))+
\FF(x_\beta\pm\sigma(x_\beta))$ with 
$\FF(x_{\alpha'}\pm\sigma(x_{\alpha'}))
+\FF(x_{\beta'}\pm\sigma(x_{\beta'}))$ is contained in another 
`homogeneous component' in \eqref{eq:D13}.
\end{itemize}
We conclude that the decomposition in \eqref{eq:D13} is indeed a set grading on 
$\cL$.
\end{proof}

Denote the grading given by \eqref{eq:D13} by $\Gamma$. Our aim now is to show that $\Gamma$ cannot be realized as a group-grading. We will do it by computing its diagonal group.

To begin with, note that the automorphism $\sigma$ belongs to 
$\Diag(\Gamma)$, and that any element of $\Diag(\Gamma)$ must act as a scalar on 
$\cH$. The arguments in the proof of 
Proposition \ref{pr:pure} work for $\Gamma$ and hence Equation 
\eqref{eq:DiagG} holds: $\Diag(\Gamma)
=\left(\Diag(\Gamma)\cap T_2\right)\times\langle\sigma\rangle$. 

For any $\chi\in\Hom(Q/2Q,\{\pm 1\})$, let $\tau_{\chi}$ be the corresponding element in $T_2$ (recall Equation \eqref{eq:T2}). Then, as in the proof of 
Proposition \ref{pr:DiagGE}, $\tau_\chi$ is in 
$\Diag(\Gamma)$ if and only if $\chi(\alpha+2Q)=\chi(\beta +2Q)$ for all
$\{\alpha,\beta\}\in\frP$, if and only if $\bigl\{(\alpha-\beta)+2Q\mid \{\alpha,\beta\}\in\frP\bigr\}$ is contained in $\ker\chi$. Therefore we get
$\Diag(\Gamma)=T_E\times\langle\sigma\rangle$ for 
$E=2Q+\ZZ\bigl\{\alpha-\beta\mid
\{\alpha,\beta\}\in\frP\bigr\}$.

In particular, for $\ell=\{i,j,k,l\}\in\frL$, with $1\leq i<j<k<l\leq 13$, the sum
$\veps_i+\veps_j+\veps_k+\veps_l=(\veps_i+\veps_j)-(\veps_k+\veps_l)+2(\veps_k+\veps_l)$ lies in $E$.
Considering the lines in the first column of \eqref{eq:lines}, and adding the elements 
$\veps_i+\veps_j+\veps_k+\veps_l$  for these lines, we check that 
$E$ contains the elements $4\veps_1+\sum_{i=2}^{13}\veps_i$. As
$4\veps_1=2\left((\veps_1-\veps_2)+(\veps_1+\veps_2)\right)$ lies in 
$2Q\leq E$, it follows that $\sum_{i\neq 1}\veps_i$ lies in $E$. 
In the same vein $\sum_{i\neq 2}\veps_i$ lies in $E$, and 
subtracting these two elements, we see that $\veps_1-\veps_2$ lies
 in $E$. The same argument shows that all the roots of the form 
$\veps_i-\veps_j$ lie in $E$. But then we get 
$Q=\ZZ\Delta\subseteq E+\ZZ(\veps_{12}+\veps_{13})\subseteq Q$, so $Q=E+\ZZ(\veps_{12}+\veps_{13})$. Also we have
$2(\veps_{12}+\veps_{13})\in 2Q\subseteq E$, and hence the index of
$E$ in $Q$ is at most $2$, and  
$\Diag(\Gamma)=T_E\times\langle\sigma\rangle$ is isomorphic to either $C_2$ or $C_2\times C_2$. Actually, the splitting 
$V=(\FF u_1+\cdots+\FF u_6)\oplus (\FF v_1+\cdots+\FF v_6)$ gives a 
$\ZZ/2$-grading on $\cL$, where the even (respectively odd) part 
consists of the elements in $\cL$ that preserve (respectively swap) the 
subspaces $\FF u_1+\cdots+\FF u_6$ and $\FF v_1+\cdots+\FF v_6$. 
The order two  automorphism $\tau$ that fixes the even part and is 
$-\id$ on the odd part lies in $T$. It fixes the root spaces 
$\cL_{\veps_i-\veps_j}$ and is $-\id$ on the root spaces 
$\cL_{\veps_i+\veps_j}$. It
follows that $\tau$ is in $\Diag(\Gamma)\cap T_2$, and we conclude that the diagonal group of $\Gamma$ is exactly
$\Diag(\Gamma)=\langle\tau,\sigma\rangle$. 

\smallskip

Alternatively, we can use some software, for instance SageMath
\cite{Sage}, to compute $[Q:E]$. Let
$W$ be the group generated by $\veps_1,\ldots,\veps_{13}$. (Note  
$[W:Q]=2$.) 
Write in a SageMath cell the following:

\smallskip

{\obeylines
\def\salto{\noindent\phantom{A=matrix([}}
\noindent\texttt{%
A=matrix([[2,-2,0,0,0,0,0,0,0,0,0,0,0],[0,2,-2,0,0,0,0,0,0,0,0,0,0],
\salto[0,0,2,-2,0,0,0,0,0,0,0,0,0],[0,0,0,2,-2,0,0,0,0,0,0,0,0],
\salto[0,0,0,0,2,-2,0,0,0,0,0,0,0],[0,0,0,0,0,2,-2,0,0,0,0,0,0],
\salto[0,0,0,0,0,0,2,-2,0,0,0,0,0],[0,0,0,0,0,0,0,2,-2,0,0,0,0],
\salto[0,0,0,0,0,0,0,0,2,-2,0,0,0],[0,0,0,0,0,0,0,0,0,2,-2,0,0],
\salto[0,0,0,0,0,0,0,0,0,0,2,-2,0],[0,0,0,0,0,0,0,0,0,0,0,2,-2],
\salto[0,0,0,0,0,0,0,0,0,0,0,2,2],
\salto[1,1,1,1,0,0,0,0,0,0,0,0,0],[1,0,0,0,1,1,1,0,0,0,0,0,0],
\salto[1,0,0,0,0,0,0,1,1,1,0,0,0],[1,0,0,0,0,0,0,0,0,0,1,1,1],
\salto[0,1,0,0,1,0,0,1,0,0,1,0,0],[0,1,0,0,0,1,0,0,1,0,0,1,0],
\salto[0,1,0,0,0,0,1,0,0,1,0,0,1],[0,0,1,0,1,0,0,0,1,0,0,0,1],
\salto[0,0,1,0,0,1,0,0,0,1,1,0,0],[0,0,1,0,0,0,1,1,0,0,0,1,0],
\salto[0,0,0,1,1,0,0,0,0,1,0,1,0],[0,0,0,1,0,1,0,1,0,0,0,0,1],
\salto[0,0,0,1,0,0,1,0,1,0,1,0,0]]);
\noindent A.elementary\_divisors()
}}
\smallskip

\noindent where the first six rows correspond to the coordinates of the elements of the basis $2\Delta$ of $2Q$ in the natural basis $\{\veps_1,\ldots,\veps_{13}\}$ of $W$. 
The last six rows are the coordinates of the elements 
$\veps_i+\veps_j+\veps_k+\veps_l$ for each line $\{i,j,k,l\}\in\frL$.

The outcome of running the SageMath cell above is the following:
\smallskip

\noindent\texttt{%
[1,1,1,1,1,1,1,1,1,1,1,1,4,0,0,0,0,0,0,0,0,0,0,0,0,0]}
\smallskip

\noindent giving the elementary divisors of the matrix $A$, 
and this shows that the index $[W:E]$ is $4$, and hence the index $[Q:E]$ is $2$. 

\smallskip

We conclude that the universal group of $\Gamma$ is also isomorphic to $C_2\times C_2$, because its group of characters is
$\Diag(\Gamma)\simeq C_2\times C_2$, but  there are $157$ nonzero 
homogeneous components, and hence the natural map $\iota:S\rightarrow U(\Gamma)$ cannot be one-to-one, where $S$ denotes the set of homogeneous components of 
$\Gamma$. We have proved the following result:

\begin{theorem}\label{th:D13}
The set grading $\Gamma$ of the simple Lie algebra of type $D_{13}$ in \eqref{eq:D13} cannot be realized as a group grading.
\end{theorem}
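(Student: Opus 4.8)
The plan is to apply the criterion recalled in the introduction: the set grading $\Gamma$ can be realized as a group grading if and only if the natural map $\iota\colon S\to U(\Gamma)$ from the set $S$ of homogeneous components to the universal group is one-to-one. Since $\cL$ is simple, its universal group is abelian (as recalled in the introduction), so over the algebraically closed field $\CC$ of characteristic $0$ characters separate points and $U(\Gamma)$ is a finite abelian group isomorphic to its own character group, which is $\Diag(\Gamma)$. Thus it suffices to compute $\Diag(\Gamma)$, show that it is small, and compare its order with the number of homogeneous components.

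First I would observe that $\Gamma$ is a pure grading, since the Cartan subalgebra $\cH$ is one of its homogeneous components in \eqref{eq:D13}. The argument in the proof of Proposition \ref{pr:pure} applies verbatim, giving $\sigma\in\Diag(\Gamma)$ together with the splitting $\Diag(\Gamma)=(\Diag(\Gamma)\cap T_2)\times\langle\sigma\rangle$. Next, as in the proof of Proposition \ref{pr:DiagGE}, an element $\tau_\chi\in T_2$ attached to $\chi\in\Hom(Q/2Q,\{\pm1\})$ lies in $\Diag(\Gamma)$ precisely when it acts by a single scalar on each two-dimensional homogeneous space, that is, when $\chi(\alpha+2Q)=\chi(\beta+2Q)$ for every pair $\{\alpha,\beta\}\in\frP$. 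Hence $\Diag(\Gamma)\cap T_2=T_E$ for $E=2Q+\ZZ\{\alpha-\beta\mid\{\alpha,\beta\}\in\frP\}$, and since $T_E$ is the character group of $Q/E$ we obtain $|\Diag(\Gamma)|=2\,[Q:E]$.

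The crux is therefore the index $[Q:E]$, and this is where the combinatorics of $\PP^2(\FF_3)$ enters. For each line $\ell=\{i,j,k,l\}\in\frL$ the element $\veps_i+\veps_j+\veps_k+\veps_l$ lies in $E$, because it equals $(\veps_i+\veps_j)-(\veps_k+\veps_l)+2(\veps_k+\veps_l)$, the first difference coming from a pair in $\frP_\ell$ and the last term lying in $2Q$. Summing these line-sums over the four lines through the point $1$ and reducing modulo $2Q$, the fact that each of the points $2,\ldots,13$ lies on exactly one such line forces $\sum_{i\neq1}\veps_i\in E$; repeating with the point $2$ and subtracting yields $\veps_1-\veps_2\in E$, and by symmetry every $\veps_i-\veps_j\in E$. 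Consequently $Q=E+\ZZ(\veps_{12}+\veps_{13})$ with $2(\veps_{12}+\veps_{13})\in 2Q\subseteq E$, so $[Q:E]\le2$; that the index is exactly $2$ follows from the explicit automorphism $\tau\in T_2$ that fixes each $\cL_{\veps_i-\veps_j}$ and acts as $-\id$ on each $\cL_{\veps_i+\veps_j}$, which is a nontrivial element of $\Diag(\Gamma)\cap T_2$, or, as a check, from the elementary-divisor computation in SageMath.

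I expect the main obstacle to be precisely this index computation: verifying cleanly, from the incidence pattern of $\PP^2(\FF_3)$, that the line-sums together with $2Q$ generate a subgroup of $Q$ of index exactly $2$, neither trivial nor all of $Q$. Once $\Diag(\Gamma)\cong C_2\times C_2$ is established, the conclusion is immediate: $U(\Gamma)\cong C_2\times C_2$ has order $4$, whereas $\Gamma$ has $157$ nonzero homogeneous components, so $\iota$ cannot be one-to-one and $\Gamma$ is not a group grading.
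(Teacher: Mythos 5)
Your proposal is correct and follows essentially the same route as the paper's own proof: the splitting $\Diag(\Gamma)=\left(\Diag(\Gamma)\cap T_2\right)\times\langle\sigma\rangle$ via the purity argument of Proposition \ref{pr:pure}, the identification $\Diag(\Gamma)\cap T_2=T_E$ with $E=2Q+\ZZ\{\alpha-\beta\mid\{\alpha,\beta\}\in\frP\}$, the line-sum computation through the points $1$ and $2$ giving $[Q:E]\le 2$, the explicit $\tau$ showing equality, and the final count $157>4=|U(\Gamma)|$. The only cosmetic difference is that you make explicit the (correct) finiteness argument identifying $U(\Gamma)$ with the dual of $\Diag(\Gamma)$, which the paper leaves implicit.
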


Actually, this grading makes sense over an arbitrary field $\FF$ of characteristic not two. Indeed, let $\cL$ be the orthogonal Lie algebra 
$\frso(V,\bup)$ of a vector space $V$ over $\FF$ of
dimension $26$, endowed with a nondegenerate symmetric bilinear form $\bup$ of maximal Witt index. That is, $\cL$ is the classical split simple
Lie algebra of type $D_{13}$ over $\FF$.
Pick a basis $\{u_1,\ldots,u_{13},v_1,\ldots,v_{13}\}$ of $V$ with 
$\bup(u_i,u_j)=0=\bup(v_i,v_j)$ and $\bup(u_i,v_j)=\delta_{ij}$ for all $i,j$. 
As before, the diagonal elements of $\frso(V,\bup)$ relative to this basis form a Cartan subalgebra $\cH$, and the weights (elements of the dual
$\cH^*$) of $V$ relative to $\cH$ are 
$\pm\veps_1,\ldots,\pm\veps_{13}$, where 
$h.u_i=\veps_i(h)u_i$, $h.v_i=-\veps_i(h)v_i$ for all $h\in\cH$ and $i=1,\ldots,n$. 
Up to a scalar, the nondegenerate bilinear form on $\cH^*$ induced 
from the trace form on $V$ is given by 
$(\veps_i\vert\veps_j)=\delta_{ij}$ (Kronecker delta).

Identify any $x\in \cL$ with its coordinate matrix relative to the basis above, which has the following block form 
$
\begin{pmatrix}A&B\\ C&-A^t\end{pmatrix}
$,
where the blocks are $13\times 13$ matrices, with $B$ and $C$ symmetric: $B=B^t$, $C=C^t$.

The root space decomposition of $\cL$ relative to $\cH$ is
\begin{equation}\label{eq:root}
\cL=\cH\oplus\left(\bigoplus_{i\neq j}\cL_{\veps_i-\veps_j}\right)
\oplus\left(\bigoplus_{i<j}\cL_{\veps_i+\veps_j}\right)
\oplus\left(\bigoplus_{i<j}\cL_{-\veps_i-\veps_j}\right)
\end{equation}
where the roots spaces are:
\[
\cL_{\veps_i-\veps_j}
=\FF\begin{pmatrix} E_{ij}&0\\ 0&-E_{ji}\end{pmatrix},\quad
\cL_{\veps_i+\veps_j}
=\FF\begin{pmatrix}0& E_{ij}\\ 0&0\end{pmatrix},\quad
\cL_{-\veps_i-\veps_j}
=\FF\begin{pmatrix} 0&0\\ E_{ij}&0\end{pmatrix},
\]
where $E_{ij}$ is the $13\times 13$-matrix with $1$ in the 
$(i,j)$-position and $0$'s elsewhere.

Let $W$ be the free abelian group with generators 
$\epsilon_1,\ldots,\epsilon_{13}$
 (note the slightly different notation:
$\epsilon_i$ for free generators, and $\veps_i$ for weights), and its
index $2$ subgroup $Q$ freely generated by $\epsilon_1-\epsilon_2,
\epsilon_2-\epsilon_3,\ldots,\epsilon_{12}-\epsilon_{13},
\epsilon_{12}+\epsilon_{13}$.

The root space decomposition \eqref{eq:root} is a $Q$-grading, with
$\cL_{\pm\epsilon_i\pm\epsilon_j}\bydef\cL_{\pm\veps_i\pm\veps_j}$ for all 
$i,j$. The automorphism $\sigma: x\mapsto -x^t$ has order $2$ and satisfies $\sigma(\cL_\alpha)=\cL_{-\alpha}$ for all roots $\alpha$.
This automorphism $\sigma$ can be used to define the grading 
$\Gamma$ as in \eqref{eq:D13}

The only scalar multiples of a root $\alpha$ are $\pm\alpha$, hence the argument in the proof of Proposition \ref{pr:pure} gives that any automorphism of $\cL$ that acts as a scalar multiple of $\id$ on $\cH$
must act necessarily as $\id$ or $-\id$. Now the arguments in 
Lemma \ref{le:phiH-id}
and Proposition \ref{pr:pure} apply to show that $T_2$ is the centralizer of $\sigma$ in $T$ and that \eqref{eq:DiagG} holds here.

On the other hand, any $\varphi\in T_2$ is determined by its
restriction to the root spaces $\cL_{\veps_1-\veps_2},\ldots,
\cL_{\veps_{12}-\veps_{13}}\cL_{\veps_{12}+\veps_{13}}$, and this allows us to identify $T_2$ with the group $\Hom(Q/2Q,\{\pm1\})$.

Proposition \ref{pr:DiagGE} and its proof thus remain valid over an arbitrary field of characteristic not two and, therefore, we conclude that the diagonal
group $\Diag(\Gamma)$ is  isomorphic to 
$C_2\times C_2$. However, over arbitrary fields the diagonal group is,
up to isomorphism, the group of characters of the universal grading group, but not conversely.

However, for any $\{\alpha,\beta\}\in\frP$ consider the subalgebra 
$\cH\oplus\bigl(\FF(x_\alpha+\sigma(x_\alpha))+
\FF(x_\beta+\sigma(x_\beta))\bigr) \oplus
\bigl(\FF(x_\alpha-\sigma(x_\alpha))+
\FF(x_\beta-\sigma(x_\beta))\bigr)$. It consists of three of the homogeneous components of $\Gamma$. The bracket of any two
of these components is nonzero and lies in the other component, because $\alpha\pm\beta$ is not a root.
This shows that all the generators of the universal group have order at most two and, therefore, the universal group is $2$-elementary. But the characteristic of 
$\FF$ being not two,
the group of characters of a $2$-elementary group is isomorphic to itself.
Hence the universal group of $\Gamma$ is isomorphic to $C_2\times C_2$, generated by $\sigma$ and by the order two automorphism 
$\tau$ that fixes the elements of the form 
$\begin{pmatrix}A&0\\ 0&-A^t\end{pmatrix}$ and multiplies by $-1$ the
elements of the form $\begin{pmatrix}0&B\\ C&0\end{pmatrix}$.  We thus get the same contradiction leading to Theorem \ref{th:D13}, which is then extended as follows.

\begin{theorem}\label{th:D13_2}
The set grading $\Gamma$ in \eqref{eq:D13} of the classical split simple Lie algebra of type $D_{13}$, over an arbitrary field of characteristic not two, cannot be realized as a group grading.
\end{theorem}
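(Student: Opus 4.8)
The plan is to reduce Theorem \ref{th:D13_2} to the same numerical contradiction that proved Theorem \ref{th:D13}, but working with the universal group directly rather than via the diagonal group, since over an arbitrary field characters no longer separate points and the identification ``diagonal group $=$ characters of $U(\Gamma)$'' only goes one way. Concretely, I would first establish that $U(\Gamma)$ is a $2$-elementary abelian group of order dividing $4$, and then observe that it has at most $4$ elements while $\Gamma$ has $157$ nonzero homogeneous components, so the natural map $\iota\colon S\to U(\Gamma)$ cannot be injective.

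First I would record that the root-space setup, the automorphism $\sigma\colon x\mapsto -x^t$, and the decomposition \eqref{eq:D13} all make sense verbatim over any field of characteristic not two, so that $\Gamma$ is genuinely a set grading (the proof of Proposition \ref{pr:set_grading} is characteristic-free, as the bracket computations and the reduction to the $D_4$ case in Example \ref{ex:D4} involve only which sums of $\veps_i$'s are roots). Next, to see that every generator of $U(\Gamma)$ has order at most two, I would exhibit, for each pair $\{\alpha,\beta\}\in\frP$, the four-dimensional subalgebra $\cH\oplus\bigl(\FF(x_\alpha+\sigma(x_\alpha))+\FF(x_\beta+\sigma(x_\beta))\bigr)\oplus\bigl(\FF(x_\alpha-\sigma(x_\alpha))+\FF(x_\beta-\sigma(x_\beta))\bigr)$, comprising the Cartan component together with two of the two-dimensional components. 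Since $\alpha\pm\beta$ is never a root, the bracket of the two non-Cartan components is nonzero and lands in the Cartan component; in the universal group this says that if $s$ denotes the class of either of these components, then $s^2=e$. As every non-Cartan generator of $U(\Gamma)$ arises this way, $U(\Gamma)$ is generated by involutions, hence (being abelian by Draper's theorem, cited after \eqref{eq:T2}) is $2$-elementary.

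To bound the size of $U(\Gamma)$ I would transport the computation of the defining relations into the free abelian group $W$ on $\epsilon_1,\dots,\epsilon_{13}$ exactly as in the characteristic-zero argument preceding Theorem \ref{th:D13}: the relations forcing $x_\alpha\pm\sigma(x_\alpha)$ and $x_\beta\pm\sigma(x_\beta)$ into a common component (for $\{\alpha,\beta\}\in\frP$) translate, after abelianization and imposition of the order-two relations, into the subgroup $E=2Q+\ZZ\{\alpha-\beta\mid\{\alpha,\beta\}\in\frP\}$ of $Q$, and the SageMath elementary-divisor computation (or the hand argument using the first column of \eqref{eq:lines} to produce $\sum_{i\neq 1}\veps_i$ and $\sum_{i\neq 2}\veps_i$, whence $\veps_1-\veps_2\in E$, and then all $\veps_i-\veps_j\in E$) shows $[Q:E]=2$. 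Combined with the $\ZZ/2$ coming from $\sigma$, this yields $|U(\Gamma)|\le 4$, and since $U(\Gamma)$ is $2$-elementary it is isomorphic to $C_2\times C_2$, generated by the classes of $\sigma$ and of $\tau$.

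The main obstacle—and the only genuinely new point compared with Theorem \ref{th:D13}—is the unidirectional nature of the character correspondence over a general field: knowing that $\Diag(\Gamma)\cong C_2\times C_2$ no longer immediately pins down $U(\Gamma)$, because characters of $U(\Gamma)$ need not separate its points when $\FF$ is not algebraically closed (and could even fail to exist in abundance). The order-two relations from the four-dimensional subalgebras are precisely what circumvents this: they let me prove $U(\Gamma)$ is $2$-elementary \emph{intrinsically}, so that its character group (in characteristic $\neq 2$, where $\pm1$ are distinct and $\FF^\times$ contains a copy of $\ZZ/2$) is isomorphic to $U(\Gamma)$ itself, recovering the bound $|U(\Gamma)|\le 4$ from the diagonal-group computation. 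With $|U(\Gamma)|\le 4<157$, the map $\iota$ collapses distinct homogeneous components, so $\Gamma$ is not a group grading, which is the assertion of Theorem \ref{th:D13_2}.
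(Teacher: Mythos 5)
Your proposal is correct in outline and follows the paper's own proof of Theorem \ref{th:D13_2} essentially step by step: observe that the construction and the diagonal-group computation are characteristic-free, prove \emph{intrinsically} that $U(\Gamma)$ is $2$-elementary via the triples of homogeneous components attached to each $\{\alpha,\beta\}\in\frP$, use that in characteristic $\neq 2$ a finitely generated $2$-elementary group is isomorphic to its character group, which is $\Diag(\Gamma)\simeq C_2\times C_2$, and contrast $|U(\Gamma)|\leq 4$ with the $157$ homogeneous components.

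One step needs repair, however. You assert that since the bracket of the two non-Cartan components is nonzero and lands in the Cartan component, the class $s$ of either non-Cartan component satisfies $s^2=e$. That single relation reads $pm=c$ in $U(\Gamma)$, where $p,m$ are the classes of the two two-dimensional components and $c$ is the class of the component containing $\cH$, and by itself it forces nothing about orders: $c$ is a generator like any other and need not be the identity (indeed, in the group grading $\Gamma_E$ of \eqref{eq:GammaE} the Cartan subalgebra lies in the component of degree $(E,\bar 1)\neq e$). You also need the relations coming from brackets with the Cartan component: since $[h,x_\alpha\pm\sigma(x_\alpha)]=\alpha(h)\bigl(x_\alpha\mp\sigma(x_\alpha)\bigr)$, these brackets are nonzero and swap the two non-Cartan components, giving $cp=m$ and $cm=p$; combined with $pm=c$ and the commutativity of $U(\Gamma)$ (Draper's result), these yield $c^2=p^2=m^2=e$. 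This is precisely why the paper states that the bracket of \emph{any two} of the three components is nonzero and lies in the third. Two trifles besides: the subalgebra in question is $17$-dimensional ($13+2+2$), not four-dimensional, which is immaterial since only its three-component structure is used; and the hand argument with the first column of \eqref{eq:lines} by itself gives $[Q:E]\leq 2$, which is all that the upper bound $|U(\Gamma)|\leq 4$ requires (equality needs the automorphism $\tau$ or the elementary-divisor computation). With the involution step repaired, your argument coincides with the paper's.
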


\smallskip

\section{An infinite family of non-group gradings on orthogonal Lie algebras}%
\label{se:Steiner}

A careful look at the non-group grading in Section \ref{se:D13} shows
that a key point is the existence of the set of lines in \eqref{eq:lines}.

All the arguments in the previous section can be applied as long as $n>4$ and there is a set $\frL$ of subsets of $4$ elements of 
$\{1,\ldots,n\}$, that we will call
lines, such that  any
two points lie in a unique line. Any such $\frL$ is called a 
$2$-$(n,4,1)$ design, or a Steiner system of type $S(2,4,n)$. (See e.g. the survey paper
\cite{ReidRosa}.)

Actually, all the arguments in the proof of 
Proposition \ref{pr:set_grading} work, but a possibility must be added, as there are now three options for the intersection of two lines 
$\ell_1$ and $\ell_2$: either $\ell_1\cap\ell_2=\emptyset$, or 
$\ell_1\cap\ell_2$ consists of a single point, or $\ell_1=\ell_2$. The extra option:
$\ell_1\cap\ell_2=\emptyset$, is dealt with easily, as in this case 
$\ell_1=\{i,j,k,l\}$ and $\ell_2=\{p,q,r,s\}$ for distinct 
$i,j,k,l,p,q,r,s$, and hence the subsets of roots $\{\alpha,\beta\}$, 
$\{\alpha',\beta'\}$ satisfy that $\alpha,\beta,\alpha',\beta'$ are all orthogonal, and hence the bracket of  
$\FF(x_\alpha\pm\sigma(x_\alpha))+
\FF(x_\beta\pm\sigma(x_\beta))$ with 
$\FF(x_{\alpha'}\pm\sigma(x_{\alpha'}))
+\FF(x_{\beta'}\pm\sigma(x_{\beta'}))$ is trivial.

Therefore, any Steiner system of type $S(2,4,n)$ allows us to define
a set grading on the simple Lie algebra of type $D_{n}$.

\smallskip

It turns out (see \cite{Hanani}) that a Steiner system of type $S(2,4,n)$ exists if and only if 
$n\equiv 1\ \text{or}\ 4\pmod{12}$. One direction is easy: if $\frL$ is 
a Steiner system of type $S(2,4,n)$, $n$ must be $\equiv 1\pmod 3$, as $1$ lies in $\frac{n-1}{3}$ lines. On the other hand, if the system has  $b$ lines,  then necessarily $4b=n\frac{n-1}{3}$, so we have 
$n(n-1)\equiv 0\pmod{12}$, and we get $n\equiv 1,4\pmod{12}$.

The lowest such $n$ is $13$, where we get the Steiner system in \eqref{eq:lines}.

If $n$ is congruent to $1$ modulo $12$, and we take the corresponding set grading using \eqref{eq:six} and \eqref{eq:D13}, and compute 
$E$ as in the previous section, we check that $E$ contains the
element $\frac{n-1}{3}\veps_1+\sum_{i\neq 1}\veps_i$. But 
$\frac{n-1}{3}$ is a multiple of $4$ so, as in the previous section,
we see that $\sum_{i\neq 1}\veps_i$ lies in $E$, and all the subsequent arguments work.

As a consequence, we obtain our last result:

\begin{theorem}
For any $n\equiv 1\pmod{12}$, $n>1$, there is a 
non-group grading on the classical split simple Lie algebra of  type $D_{n}$ over an arbitrary field $\FF$ of characteristic $\neq 2$. One of the homogeneous components of this grading is a Cartan subalgebra, and all the other homogeneous components have dimension $2$.
\end{theorem}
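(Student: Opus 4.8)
The plan is to follow the template established in Sections \ref{se:D13} and the preceding discussion almost verbatim, replacing the specific Steiner system \eqref{eq:lines} of type $S(2,4,13)$ by an arbitrary Steiner system $\frL$ of type $S(2,4,n)$ with $n\equiv 1\pmod{12}$ (which exists by \cite{Hanani}). Concretely, I would set $\cL=\frso(V,\bup)$ with $\dim V=2n$, fix the Cartan subalgebra $\cH$ of diagonal elements, the simple system $\Delta=\{\veps_1-\veps_2,\dots,\veps_{n-1}-\veps_n,\veps_{n-1}+\veps_n\}$, and an automorphism $\sigma$ with $\sigma\vert_\cH=-\id$. For each line $\ell=\{i,j,k,l\}\in\frL$ I would form the six pairs of orthogonal positive roots exactly as in \eqref{eq:six}, let $\frP=\bigcup_{\ell\in\frL}\frP_\ell$, and define $\Gamma$ by the decomposition \eqref{eq:D13}. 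The defining property of $S(2,4,n)$ — every two points lie in a unique line — guarantees that each positive root $\pm\veps_i\pm\veps_j$ occurs in exactly one pair, so $\Gamma$ is a genuine vector-space decomposition.

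Next I would establish that $\Gamma$ is a set grading. This is the content of the extension of Proposition \ref{pr:set_grading} already sketched in the text: given two lines $\ell_1,\ell_2$, there are now three cases for $\ell_1\cap\ell_2$. When $\ell_1=\ell_2$ or $\lvert\ell_1\cap\ell_2\rvert=1$ the verification is identical to the two bullet points in the proof of Proposition \ref{pr:set_grading} (the single-point case reduces to a root-string computation, and the coincident case to the $D_4$ group grading $\Gamma_E$ of Example \ref{ex:D4}). The new case $\ell_1\cap\ell_2=\emptyset$ is immediate, since then all four roots $\alpha,\beta,\alpha',\beta'$ are pairwise orthogonal and the bracket vanishes. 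Thus $\Gamma$ is a set grading whose homogeneous components are the Cartan subalgebra $\cH$ together with $b\times 6\times 2$ two-dimensional abelian subalgebras, where $b=\frac{n(n-1)}{12}$ is the number of lines.

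To show $\Gamma$ is not a group grading I would compute its diagonal group and its universal group exactly as in Section \ref{se:D13}. By Proposition \ref{pr:pure}, $\Diag(\Gamma)=(\Diag(\Gamma)\cap T_2)\times\langle\sigma\rangle$, and $\tau_\chi\in\Diag(\Gamma)\cap T_2$ iff $E/2Q\subseteq\ker\chi$, where $E=2Q+\ZZ\{\alpha-\beta\mid\{\alpha,\beta\}\in\frP\}$. The crucial arithmetic input is the observation already flagged in the text: summing the elements $\veps_i+\veps_j+\veps_k+\veps_l$ over the $\frac{n-1}{3}$ lines through the point $1$ shows that $E$ contains $\frac{n-1}{3}\veps_1+\sum_{i\neq 1}\veps_i$, and since $\frac{n-1}{3}\equiv 0\pmod 4$ (this is exactly where $n\equiv 1\pmod{12}$, rather than merely $n\equiv 1\pmod 3$, is used) the term $\frac{n-1}{3}\veps_1\in 2Q\leq E$ drops out, giving $\sum_{i\neq 1}\veps_i\in E$. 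Running the same argument over all points yields every $\veps_i-\veps_j\in E$, whence $Q=E+\ZZ(\veps_{n-1}+\veps_n)$ with $2(\veps_{n-1}+\veps_n)\in 2Q\leq E$, so $[Q:E]\leq 2$ and $\Diag(\Gamma)$ has order at most $4$. As in the characteristic-free discussion after Theorem \ref{th:D13}, each subalgebra $\cH\oplus\bigl(\FF(x_\alpha+\sigma(x_\alpha))+\FF(x_\beta+\sigma(x_\beta))\bigr)\oplus\bigl(\FF(x_\alpha-\sigma(x_\alpha))+\FF(x_\beta-\sigma(x_\beta))\bigr)$ forces the generators of $U(\Gamma)$ to have order at most two, so $U(\Gamma)$ is $2$-elementary and (char $\FF\neq 2$) isomorphic to its own character group $\Diag(\Gamma)$, which has order at most $4$. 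Since $\Gamma$ has $1+12b$ nonzero homogeneous components and this vastly exceeds $4$ for $n>4$, the map $\iota\colon S\to U(\Gamma)$ cannot be injective, so $\Gamma$ is not a group grading.

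I expect the main obstacle to be purely bookkeeping rather than conceptual: verifying the divisibility $\frac{n-1}{3}\equiv 0\pmod 4$ and confirming that the chain of subtractions producing all the $\veps_i-\veps_j$ really goes through for a general Steiner system (not just the explicit $S(2,4,13)$) requires care, since the combinatorics of which lines pass through a given point is no longer laid out in a table. Concretely, one must check that for each $i$ the set $E$ contains $\sum_{k\neq i}\veps_k$, and this follows uniformly because every point lies in exactly $\frac{n-1}{3}$ lines; but one should confirm that no degeneracy in small cases (e.g.\ whether two of the produced elements coincide) obstructs extracting all the differences $\veps_i-\veps_j$. Apart from this the proof is a direct transcription of the $D_{13}$ argument, and the software-assisted index computation can be replaced by the clean lattice argument above.
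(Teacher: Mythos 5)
Your proposal is correct and takes essentially the same route as the paper: the same Steiner-system replacement of \eqref{eq:lines} in the construction \eqref{eq:six}--\eqref{eq:D13}, the same three-case verification of the set-grading property (with the new disjoint-lines case trivial by orthogonality), the same key divisibility $\frac{n-1}{3}\equiv 0\pmod 4$ yielding $\veps_i-\veps_j\in E$ and $[Q:E]\leq 2$, and the same $2$-elementary universal-group argument over characteristic $\neq 2$. Your closing worry is unfounded --- the lines through a fixed point partition the remaining $n-1$ points into triples, so each element $\sum_{k\neq i}\veps_k$ is produced uniformly and the subtractions go through --- and your lattice computation in place of the SageMath check is in fact the paper's own primary argument, the software being offered there only as an alternative.
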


\bigskip


\begin{thebibliography}{Hum72}


\bibitem[Eld06]{Eld06}
A.~Elduque, \emph{A Lie grading which is not a semigroup grading}, Linear Algebra Appl. \textbf{418} (2006), no.~1, 312--314.

\bibitem[Eld09]{Eld09}
A.~Elduque, \emph{More non-semigroup Lie gradings}, Linear Algebra Appl. \textbf{431} (2009), no.~9, 1603--1606.


\bibitem[EK13]{EKmon} A.~Elduque and M.~Kochetov, \emph{Gradings on simple Lie algebras}, Mathematical
Surveys and Monographs {\bf 189}, American Mathematical Society, Providence, RI; Atlantic
Association for Research in the Mathematical Sciences (AARMS), Halifax, NS, 2013.

\bibitem[Han61]{Hanani} 
H.~Hanani, \emph{The existence and construction of balanced incomplete
block designs}, Ann. Math. Statist. \textbf{32} (1961), 361--386.


\bibitem[Hes82]{Hes} W.H.~Hesselink, \emph{Special and pure gradings 
of Lie algebras}, Math.~Z. \textbf{179} (1982), no.~1, 135--149.

\bibitem[Hum72]{Humphreys} J.E.~Humphreys, \emph{Introduction to Lie algebras and representation theory},
Graduate Texts in Mathematics, Vol.~\textbf{9}. Springer-Verlag, New York-Berlin, 1972. 

\bibitem[PZ89]{PZ}
J.~Patera and H.~Zassenhaus, \emph{On Lie gradings. I}, Linear Algebra Appl. 
\textbf{112} (1989) 87--159.

\bibitem[Sage]{Sage}
The {S}age developers, \emph{{S}agemath, the 
{S}age {M}athematics {S}oftware {S}ystem
  ({V}ersion 9.1)}, (2020), \texttt{https://www.sagemath.org}.



\bibitem[RR10]{ReidRosa}
C.~Reid and A.~Rosa, \emph{Steiner systems $S(2,4,\upsilon)$ -- a survey},
Electronic J.~Combin. (2010), \# DS18.



\end{thebibliography}
\end{document}